\documentclass[a4paper]{amsart}
\usepackage{orcidlink}

\title{Tensor factorization of local algebras}

\author[Taro Sakurai]{Taro Sakurai\,\orcidlink{0000-0003-0608-1852}}
\address[Taro Sakurai]
{Department of Mathematics and Informatics, Graduate School of Science, Chiba University, 1-33 Yayoi-cho, Inage-ku, Chiba-shi, Chiba, 263-8522, Japan.}
\email{tsakurai@math.s.chiba-u.ac.jp}

\date{\today}

\subjclass[2020]{16P10 (Primary) 16L30, 16S34, 20D15 (Secondary)}

\keywords{tensor product, local algebra, splitting field, cancellation law, modular isomorphism problem, group algebra}

\usepackage[T1]{fontenc}
\usepackage{lmodern}

\usepackage[svgnames]{xcolor}
\usepackage{tikz-cd}
\usepackage{mathtools}

\usepackage{aliascnt} 
\usepackage{hyperref}
\makeatletter
\hypersetup{
  colorlinks=true,
  allcolors=MediumBlue,
  pdflang=en,
  pdfauthor=\shortauthors, 
  pdftitle=\@title,
  pdfsubject=\@subjclass,
  pdfkeywords=\@keywords,
}
\makeatother
\usepackage{imakeidx}

\usepackage[backend=biber,giveninits=true,uniquename=false,doi=true]{biblatex}
\renewbibmacro{in:}{}
\DeclareFieldFormat{pages}{#1}

\usepackage{cleveref}
\crefdefaultlabelformat{#2\textup{#1}#3}
\crefname{section}{Section}{Sections}
\crefname{mainthm}{Theorem}{Theorems}
\crefname{maincor}{Corollary}{Corollaries}
\crefname{thm}{Theorem}{Theorems}
\crefname{prop}{Proposition}{Propositions}
\crefname{lem}{Lemma}{Lemmas}
\crefname{rmk}{Remark}{Remarks}
\crefname{dfn}{Definition}{Definitions}
\crefname{hyp}{Hypothesis}{Hypotheses}

\theoremstyle{plain}

\newtheorem{mainthm}{Theorem}

\newaliascnt{maincor}{mainthm}
\newtheorem{maincor}[maincor]{Corollary}
\aliascntresetthe{maincor}

\newaliascnt{prop}{thm}
\newtheorem{prop}[prop]{Proposition}
\aliascntresetthe{prop}

\newaliascnt{lem}{thm}
\newtheorem{lem}[lem]{Lemma}
\aliascntresetthe{lem}

\theoremstyle{definition}

\newaliascnt{rmk}{thm}
\newtheorem{rmk}[rmk]{Remark}
\aliascntresetthe{rmk}

\newaliascnt{dfn}{thm}
\newtheorem{dfn}[dfn]{Definition}
\aliascntresetthe{dfn}

\newaliascnt{hyp}{thm}
\newtheorem{hyp}[hyp]{Hypothesis}
\aliascntresetthe{hyp}

\newcommand{\FF}{\mathbb{F}}
\newcommand{\QQ}{\mathbb{Q}}
\renewcommand{\epsilon}{\varepsilon}
\renewcommand{\phi}{\varphi}
\DeclareMathOperator{\id}{id}
\DeclareMathOperator{\Ker}{Ker}
\renewcommand{\Im}{\operatorname{Im}}
\newcommand{\Elem}{\textup{E}}
\newcommand{\Ab}{\textup{A}}

\begin{document}

\begin{abstract}
  Let $A$ be a finite-dimensional local algebra over a splitting field.
  We prove that an irreducible tensor factorization $A \cong P_1 \otimes \dotsb \otimes P_n$ of the algebra $A$ is unique up to isomorphism and order of $P_1, \dotsc, P_n$.
\end{abstract}

\maketitle

\section{Introduction}
Let \( F \) be a field.
\index{\( F \) : Field}%
All algebras are assumed to be associative and finite-dimensional over \( F \) with identity \( 1 \neq 0 \).
All tensor products \( \otimes \) are taken over \( F \).
An algebra is trivial if it is isomorphic to \( F \).
For algebras \( A \) and \( B \), we call \( B \) a tensor factor of \( A \) if \( A \cong B \otimes C \) for some algebra \( C \).
\index{\( A \) : Algebra}%
\index{\( B \) : Algebra}%
\index{\( C \) : Algebra}%
A non-trivial algebra \( P \) is \emph{irreducible} if every tensor factor of \( P \) is isomorphic to \( F \) or \( P \).
\index{\( P \) : Algebra}%
A tensor factorization
\[
  A \cong P_1 \otimes \dotsb \otimes P_n
\]
of an algebra \( A \) is \emph{irreducible} if \( P_1, \dotsc, P_n \) are irreducible.
\index{\( n \) : Integer}%
\index{\( P_1, \dotsc, P_n \) : Algebra}%
The empty tensor product is understood to be \( F \).
Every algebra admits an irreducible tensor factorization, but it need not be unique.

For example, the number field \( \QQ(\sqrt{2}, \sqrt{3}) \) admits two irreducible tensor factorizations
\[
  \QQ(\sqrt{2}, \sqrt{3})
  \cong \QQ(\sqrt{2}) \otimes \QQ(\sqrt{6})
  \cong \QQ(\sqrt{3}) \otimes \QQ(\sqrt{6}),
\]
but \( \QQ(\sqrt{2}) \) and \( \QQ(\sqrt{3}) \) are non-isomorphic.
\index{\( \QQ \) : Field}%
In light of the results below, uniqueness fails here because \( \QQ(\sqrt{2}, \sqrt{3}) \) does not split over \( \QQ \).

Nevertheless, the tensor product behaves better for local algebras over splitting fields.
Horst's theorem~\cite{Ho87} shows uniqueness of irreducible tensor factorization for local (commutative\footnote{See \cref{rmk:comm}.}) algebras over splitting fields of characteristic zero.
It is essential in her proof to work in characteristic zero, because removing that assumption from a lemma gives rise to counterexamples\footnote{See \cref{rmk:char}.}.
As N\"usken \cite[p.~527]{Nu02} pointed out, the problem remained open in positive characteristic.

In this paper, we establish uniqueness of irreducible tensor factorization for local algebras over splitting fields without assuming commutativity or characteristic zero.

\begin{mainthm}
  \label{main:thm}
  Let \( A \) be a finite-dimensional local algebra over a splitting field.
  If \( A \) admits two irreducible tensor factorizations
  \[
    A \cong P_1 \otimes \dotsb \otimes P_n \cong Q_1 \otimes \dotsb \otimes Q_m,
  \]
  then \( n = m \) and, after reordering indices, \( P_i \cong Q_i \) for every \( i \).
\end{mainthm}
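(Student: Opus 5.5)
We prove uniqueness by a cancellation argument using the associated graded algebra and the Lie algebra of derivations. The guiding principle is Krull--Schmidt-like: \emph{a tensor product of local algebras over a splitting field has a ``graded'' invariant that behaves additively with respect to \(\otimes\), and indecomposability under \(\otimes\) can be detected there}. Concretely, write \(J = J(A)\). Since \(F\) is a splitting field and \(A\) is local, \(A/J \cong F\), so \(A = F \oplus J\). For \(A \cong B \otimes C\) with \(B, C\) local, we have \(J(B\otimes C) = J(B)\otimes C + B\otimes J(C)\). Hence
\[
J/J^2 \cong J(B)/J(B)^2 \oplus J(C)/J(C)^2.
\]
In particular, \(\dim J/J^2\) is additive, and non-trivial factors have positive embedding dimension. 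Induction on \(\dim A\), or on \(\dim J/J^2\), then shows existence; uniqueness is the real content.

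The key step is a \emph{cancellation lemma}: if \(P\otimes B \cong P\otimes C\) with all algebras local, then \(B\cong C\). A second lemma says that if an irreducible \(P\) is a tensor factor of \(Q_1\otimes\dotsb\otimes Q_m\), then \(P\cong Q_j\) for some \(j\). Given both, the theorem follows by induction on \(n\). We locate \(P_1 \cong Q_j\), reorder so that \(j=1\), cancel to get \(P_2\otimes\dotsb\otimes P_n \cong Q_2\otimes\dotsb\otimes Q_m\), and apply induction; the count \(n=m\) comes out of the induction.

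For the second lemma, I plan to work with the subalgebra-level picture. A decomposition \(A = B\otimes C\) corresponds to a pair of commuting subalgebras \(B, C\) with \(B\otimes C\to A\) an isomorphism. I would try to compare two internal decompositions \(A = P\cdot R = Q_1\cdots Q_m\) via the projections on \(J/J^2\). Since \(J/J^2\) splits as a direct sum along each factorization, an exchange-type argument is available: some \(Q_j\) ``replaces'' \(P\), i.e.\ \(A = Q_j\cdot R\) and also \(A = P\cdot R'\). To realize this on the algebra level, I would use the automorphism group of \(A\) and lift linear-algebra exchanges through the filtration \(J\supseteq J^2\supseteq\dotsb\), choosing the index \(j\) for which the projection of \(P\)'s part of \(J/J^2\) onto \(Q_j\)'s part is ``nondegenerate'' in a suitable sense. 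Then irreducibility of \(P\) and \(Q_j\) forces \(P\cong Q_j\).

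The main obstacle is the cancellation lemma in positive characteristic, exactly where Horst's argument breaks (\cref{rmk:char}). In characteristic zero one can use exponentials or separability of derivations, which are unavailable here. My first attempt would be to count homomorphisms. For a local algebra \(X\) over a splitting field, consider the number (if \(F\) is finite) or variety of algebra maps \(X\to A\), or better of ``surjection-type'' data, and show that \(\operatorname{Hom}(P\otimes B, T)\) determines \(\operatorname{Hom}(B,T)\) for all local test algebras \(T\). This is a Lovász-type cancellation. Since \(\operatorname{Hom}(P\otimes B,T)\) is the set of pairs of maps with commuting images, it is not simply a product, so I would instead use the test functor \(T \mapsto\) pairs with commuting images, restricting to commutative \(T\) first. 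If that fails, the fallback is a dimension-of-\(J^k\) Hilbert-series argument combined with induction on \(\dim A\) through quotients \(A/J^k\). Which approach handles the noncommutative case is the point I am least sure of.
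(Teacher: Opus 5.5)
There is a genuine gap. Your skeleton (match $P_1\cong Q_j$, cancel, induct) is logically fine, but all of the content sits in two lemmas you do not prove. The first is essentially the theorem itself: the cancellation law $P\otimes B\cong P\otimes C\Rightarrow B\cong C$ is \cref{main:cor}, which the paper deduces \emph{from} uniqueness. None of your proposed routes supplies it.

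\begin{itemize}
\item The product formula behind a Lov\'asz-type argument holds only for commutative test algebras $T$. There $\operatorname{Hom}(P\otimes B,T)=\operatorname{Hom}(P,T)\times\operatorname{Hom}(B,T)$, and $\operatorname{Hom}(P,T)\neq\emptyset$ via the augmentation. For finite $F$, M\"obius inversion over subalgebras of $T$ then gives surjections in both directions. But these surjections relate only the largest commutative quotients of $B$ and $C$, so this proves cancellation only for commutative algebras over finite fields.
\item Over infinite $F$ there is nothing to count.
\item For general $T$ the count is $\sum_{g\in\operatorname{Hom}(B,T)}|\operatorname{Hom}(P,C_T(g(B)))|$, with $C_T$ the centralizer, and $P$ cannot be divided out of this.
\item The Hilbert series of the $J$-adic filtration is multiplicative and so cancels, but it is only a numerical invariant.
\item $(B\otimes C)/J^k$ is not a tensor product of truncations of $B$ and $C$, so induction through the quotients $A/J^k$ does not reduce to a smaller instance of the same problem.
\end{itemize}

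The second lemma is not independent of the first: turning an exchange $A=Q_j\cdot R$ into $P\cong Q_j$ is again cancellation. Its selection rule on $J/J^2$ also cannot work in characteristic $p$. Take $F[\langle a\rangle\times\langle b\rangle]$ with $a$ of order $p$ and $b$ of order $p^2$. The irreducible factor $P=F\langle ab\rangle$ complements $F\langle a\rangle$. Its cotangent line is spanned by the class of $(a-1)+(b-1)$, which projects isomorphically onto the cotangent spaces of both $F\langle a\rangle$ and $F\langle b\rangle$. Yet $P\not\cong F\langle a\rangle$.

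The missing idea is to work with algebra maps and use cotangent spaces only to detect surjectivity. The paper proceeds as follows.
\begin{itemize}
\item It views the irreducible factor $Q$ of $A\cong Q\otimes R$ as a retract of $P_1\otimes\dotsb\otimes P_n$ via $\rho=\pi_Q\phi$ and $\sigma=\phi^{-1}\iota_Q$.
\item It applies Fitting's lemma to the \emph{algebra endomorphism} $\beta=\rho\iota_B\pi_B\sigma$ of $Q$, obtaining $Q\cong\Im\beta^k\otimes Q/Q\beta^k(J_Q)Q$ (\cref{prop:factor}).
\item Irreducibility then makes $Q$ a retract, hence a quotient, of a single $P_i$ (\cref{prop:first}).
\item A cotangent computation converts the surjection $P_i\to Q$ into a surjection $R\to\bigotimes_{j\neq i}P_j$ (\cref{prop:other}).
\item Choosing $Q$ of maximal dimension among all factors forces both surjections to be isomorphisms by counting dimensions.
\end{itemize}
This gives the matching and the cancellation at once, with no standalone cancellation lemma; cancellation is obtained afterwards as a corollary.
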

\index{\( m \) : Integer}%
\index{\( Q_1, \dotsc, Q_m \) : Algebra}%
\index{\( i \) : Integer}%

The uniqueness implies that the tensor product is cancellative for local algebras over splitting fields.

\begin{maincor}
  \label{main:cor}
  Let \( A, B, C \) be finite-dimensional local algebras over a splitting field.
  Then
  \[
    A \otimes C \cong B \otimes C \implies A \cong B.
  \]
\end{maincor}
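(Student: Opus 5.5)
The plan is to derive \cref{main:cor} directly from \cref{main:thm} by comparing irreducible tensor factorizations as multisets of isomorphism classes. First, every algebra admits an irreducible tensor factorization; this is stated in the introduction and follows by induction on dimension. So choose irreducible tensor factorizations
\[
  A \cong P_1 \otimes \dotsb \otimes P_n, \qquad
  B \cong Q_1 \otimes \dotsb \otimes Q_m, \qquad
  C \cong R_1 \otimes \dotsb \otimes R_k .
\]
Concatenating them gives irreducible tensor factorizations
\[
  A \otimes C \cong P_1 \otimes \dotsb \otimes P_n \otimes R_1 \otimes \dotsb \otimes R_k
  \quad\text{and}\quad
  B \otimes C \cong Q_1 \otimes \dotsb \otimes Q_m \otimes R_1 \otimes \dotsb \otimes R_k
\]
of the same algebra \( A \otimes C \cong B \otimes C \).

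To apply \cref{main:thm}, I must check that \( A \otimes C \) is again a finite-dimensional local algebra over the splitting field \( F \). Since \( F \) is a splitting field and \( A, C \) are local, we have \( A/J(A) \cong F \) and \( C/J(C) \cong F \). The ideal
\[
  I = J(A) \otimes C + A \otimes J(C)
\]
is nilpotent, because \( J(A) \otimes 1 \) and \( 1 \otimes J(C) \) commute and are nilpotent. Moreover \( (A \otimes C)/I \cong A/J(A) \otimes C/J(C) \cong F \). Hence \( I = J(A \otimes C) \) and \( A \otimes C \) is local with residue field \( F \), so the hypotheses of \cref{main:thm} hold.

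\Cref{main:thm} then gives \( n + k = m + k \) and a bijection between the two lists of factors matching them up to isomorphism. Equivalently, the multisets of isomorphism classes
\[
  \{[P_1], \dotsc, [P_n], [R_1], \dotsc, [R_k]\}
  \quad\text{and}\quad
  \{[Q_1], \dotsc, [Q_m], [R_1], \dotsc, [R_k]\}
\]
coincide. Removing the common sub-multiset \( \{[R_1], \dotsc, [R_k]\} \) shows that \( n = m \) and that, after reordering, \( P_i \cong Q_i \) for every \( i \). Therefore \( A \cong P_1 \otimes \dotsb \otimes P_n \cong Q_1 \otimes \dotsb \otimes Q_n \cong B \).

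The main obstacle is only the verification that \( A \otimes C \) stays local over a splitting field, which is handled by the nilpotent ideal \( I \) above. Trivial factors cause no problem, since irreducible algebras are non-trivial by definition, and the empty factorization corresponds to \( F \).
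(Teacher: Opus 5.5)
Your proposal is correct and follows the route the paper intends: the paper only remarks that uniqueness implies cancellation, and your argument concatenates irreducible tensor factorizations of \(A\), \(B\), \(C\), applies \cref{main:thm}, and cancels the common factors \(R_1, \dotsc, R_k\) from the multisets of isomorphism classes. You also check explicitly that \(A \otimes C\) is local and split, using the nilpotent ideal \(J_A \otimes C + A \otimes J_C\) with quotient \(F\); the paper leaves this step implicit, so your version is slightly more complete.
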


After recalling basic definitions in \cref{sec:prelim}, we prove a key tensor factorization in \cref{prop:factor}.
Then we show how to match the first tensor factor in \cref{prop:first} and the other tensor factors in \cref{prop:other}.
In \cref{sec:uniq}, we present the inductive proof of uniqueness.
Finally, we provide an application of the cancellation law to the modular isomorphism problem in \Cref{appx:MIP}.

\section{Preliminaries}
\label{sec:prelim}
Let \( A \) be an algebra over a field \( F \) and let \( J_A \) denote the Jacobson radical of \( A \).
\index{\( J_A \) : Ideal}%
Recall that \( A \) is local if \( A/J_A \) is a division algebra.
Also \( F \) is a splitting field for \( A \), or \( A \) splits over \( F \), if \( A/J_A \) is isomorphic to a direct product of matrix algebras over \( F \).

Let \( B \) and \( C \) be local algebras over a splitting field \( F \).
Let \( \epsilon_C \colon C \to F \) denote the projection along \( J_C \), called the augmentation map.
\index{\( \epsilon_C \) : Homomorphism}%
Then there are canonical homomorphisms
\[
  \iota_B \colon B \to B \otimes C
  \quad\text{and}\quad
  \pi_B \colon B \otimes C \to B
\]
defined by \( \iota_B(b) = b \otimes 1 \) and \( \pi_B(b \otimes c) = b\epsilon_C(c) \).
\index{\( \iota_B \) : Homomorphism}%
\index{\( \pi_B \) : Homomorphism}%
\index{\( b \) : Element}%
\index{\( c \) : Element}%
Define \( \iota_C \colon C \to B \otimes C \) and \( \pi_C \colon B \otimes C \to C \) similarly.
Let \( T_B \) denote the cotangent space \( J_B/{J_B}^2 \).
\index{\( T_B \) : Vector Space}%
An algebra homomorphism \( \psi \colon B \to C \) induces a cotangent map
\[
  T_\psi \colon T_B \to T_C.
\]
\index{\( \psi \) : Homomorphism}%
Hence \( T \) can be viewed as a functor from the category of local algebras to that of vector spaces.
We repeatedly use the facts that
\[
  T_{\iota_B\pi_B} + T_{\iota_C\pi_C} = \id_{T_{B \otimes C}}
\]
and that if \( T_\psi \) is surjective, then so is \( \psi \).

For algebras \( P \) and \( Q \), a homomorphism \( \rho \colon P \to Q \) is called a retraction if there exists a homomorphism \( \sigma \colon Q \to P \) such that \( \rho\sigma = \id_Q \).
\index{\( Q \) : Algebra}%
\index{\( \rho \) : Homomorphism}%
\index{\( \sigma \) : Homomorphism}%
Such a homomorphism \( \sigma \) is called a section of \( \rho \).

\section{Tensor factorization}
First we define two algebras associated to an endomorphism.
\begin{dfn}
  Let \( Q \) be an algebra and let \( \beta \colon Q \to Q \) be an endomorphism.
  \index{\( \beta \) : Homomorphism}%
  By Fitting's lemma, there exists a least \( k \geq 1 \) such that \( Q = \Im \beta^k \oplus \Ker \beta^k \).
  \index{\( k \) : Integer}%
  We write
  \[
    X_\beta = \Im \beta^k
    \quad\text{and}\quad
    \varpi_{X_\beta} \colon Q \to X_\beta
  \]
  \index{\( X_\beta \) : Algebra}%
  \index{\( \varpi_{X_\beta} \) : Homomorphism}%
  for the projection along \( \Ker \beta^k \).
  Also we write
  \[
    Y_\beta = Q/Q \beta^k(J_Q) Q
    \quad\text{and}\quad
    \varpi_{Y_\beta} \colon Q \to Y_\beta
  \]
  for the canonical projection.
  \index{\( Y_\beta \) : Algebra}%
  \index{\( \varpi_{Y_\beta} \) : Homomorphism}%
\end{dfn}

The following is assumed throughout this section.
\begin{hyp}
  \label{hyp:common}
  Let \( B \), \( C \), \( Q \) be local algebras over a splitting field.
  Let \( \rho \colon B \otimes C \to Q \) be a retraction with a section \( \sigma \colon Q \to B \otimes C \).
  Define the homomorphisms \( \beta \colon Q \to Q \) and \( \gamma \colon Q \to Q \) by \( \beta = \rho\iota_B\pi_B\sigma \) and \( \gamma = \rho\iota_C\pi_C\sigma \).
  \index{\( \gamma \) : Homomorphism}%
  Set \( X = X_\beta \) and \( Y = Y_\beta \).
  \index{\( X \) : Algebra}%
  \index{\( Y \) : Algebra}%
  Define the homomorphism \( \alpha \colon Q \to X \otimes Y \) by \( \alpha = (\varpi_X\rho\iota_B \otimes \varpi_Y\rho\iota_C)\sigma \).
  \index{\( \alpha \) : Homomorphism}%
\end{hyp}
The commutative diagram below summarizes the algebras and homomorphisms in \cref{hyp:common}.
The aim of this section is to prove that \( \alpha \colon Q \to X \otimes Y \) is bijective.
\[
  \begin{tikzcd}[sep=huge]
  B \arrow[d, "\rho\iota_B"'] & B \otimes C \arrow[l, "\pi_B"']\arrow[r, "\pi_C"]                                                                             & C \arrow[d, "\rho\iota_C"] \\
  Q \arrow[d, "\varpi_X"']    & Q \arrow[l, "\beta" description]\arrow[u, "\sigma" description]\arrow[r, "\gamma" description]\arrow[d, "\alpha" description] & Q \arrow[d, "\varpi_Y"] \\
  X                           & X \otimes Y \arrow[l, "\pi_X"]\arrow[r, "\pi_Y"']                                                                             & Y
  \end{tikzcd}
\]

\begin{lem}
  \label{lem:ImKer}
  Under \cref{hyp:common}, the following hold for some \( k \geq 1 \).
  \begin{itemize}
    \item[(a)] \( T_Q = \Im T_{\beta^k} + \Ker T_{\beta^k} \).
    \item[(b)] \( T_{\varpi_X\beta}(\Im T_{\beta^k}) = T_X \).
    \item[(c)] \( T_{\varpi_Y\gamma}(\Im T_{\beta^k}) = 0 \).
    \item[(d)] \( T_{\varpi_X\beta}(\Ker T_{\beta^k}) = 0 \).
    \item[(e)] \( T_{\varpi_Y\gamma}(\Ker T_{\beta^k}) = T_Y \).
  \end{itemize}
\end{lem}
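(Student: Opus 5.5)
The plan is to reduce everything to linear algebra on the cotangent space \( T_Q \). The key identity is
\[
  T_\beta + T_\gamma = \id_{T_Q}.
\]
It follows from functoriality: \( T_\beta + T_\gamma = T_\rho(T_{\iota_B\pi_B} + T_{\iota_C\pi_C})T_\sigma = T_\rho T_\sigma = T_{\rho\sigma} = \id_{T_Q} \). So \( T_\gamma = \id - T_\beta \), and every subspace stable under \( T_\beta \) is also stable under \( T_\gamma \).

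\emph{Choice of \( k \).} I choose \( k \) large enough that both \( Q = \Im\beta^k \oplus \Ker\beta^k \) holds (Fitting for \( \beta \)) and \( T_Q = \Im T_{\beta}^k \oplus \Ker T_{\beta}^k \) holds (Fitting for the linear map \( T_\beta \) on the finite-dimensional space \( T_Q \)). This is possible because both decompositions persist when \( k \) is increased. Since \( T_{\beta^k} = T_\beta^k \), this gives (a).

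I need to check that enlarging \( k \) beyond the least value in the definition does not change \( X \) and \( Y \). Once the Fitting decomposition holds, \( \Im\beta^k \) and \( \Ker\beta^k \) are stable in \( k \), so \( X \) is unchanged. For \( Y \), I use \( \beta^{k+1}(J_Q) = \beta^k(\beta(J_Q)) \) together with the fact that \( \beta^k(J_Q) = \beta^k(J_Q \cap X) \) is \( \beta \)-stable, so \( \beta^{k+1}(J_Q) = \beta^k(J_Q) \). I expect this bookkeeping to be the only slightly delicate point.

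\emph{Parts (b) and (d).} Since \( \beta^k \) is an algebra endomorphism, \( X = \Im\beta^k \) is a subalgebra and \( \Ker\beta^k \) is an ideal. Hence \( \varpi_X \) is a surjective algebra homomorphism, and \( \beta|_X \) is an automorphism of \( X \) with \( \varpi_X\beta = \beta|_X\varpi_X \) and \( \beta^k = (\beta|_X)^k\varpi_X \). Therefore \( T_{\varpi_X} \) is surjective, and since \( T_{(\beta|_X)^k} \) is invertible, \( \Ker T_{\varpi_X} = \Ker T_{\beta^k} \). Then
\[
  T_{\varpi_X\beta} = T_{\beta|_X}T_{\varpi_X}
\]
vanishes on \( \Ker T_{\beta^k} \), which is (d). Using (a), it maps \( \Im T_{\beta^k} \) onto \( T_{\beta|_X}(T_X) = T_X \), which is (b).

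\emph{Parts (c) and (e).} Let \( I = Q\beta^k(J_Q)Q \). Since homomorphisms of local algebras preserve radicals, \( I \subseteq J_Q \). Writing \( Q = F + J_Q \) gives \( I + J_Q^2 = \beta^k(J_Q) + J_Q^2 \). Hence \( T_{\varpi_Y} \colon T_Q \to T_Y = J_Q/(I + J_Q^2) \) is surjective with kernel exactly \( \Im T_{\beta^k} \).

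For (c), note \( T_{\varpi_Y\gamma} = T_{\varpi_Y}(\id - T_\beta) \). Since \( \Im T_{\beta^k} \) is \( T_\beta \)-stable, \( (\id - T_\beta) \) maps it into itself, where \( T_{\varpi_Y} \) vanishes. For (e), \( T_\beta \) is nilpotent on \( \Ker T_{\beta^k} \), so \( \id - T_\beta \) restricts to a bijection of \( \Ker T_{\beta^k} \). Thus \( T_{\varpi_Y\gamma}(\Ker T_{\beta^k}) = T_{\varpi_Y}(\Ker T_{\beta^k}) \), and by (a) together with \( T_{\varpi_Y}(\Im T_{\beta^k}) = 0 \), this equals \( T_{\varpi_Y}(T_Q) = T_Y \).
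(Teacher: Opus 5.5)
Your proof is correct and follows the paper's argument: the identity \( T_\beta + T_\gamma = \id_{T_Q} \), bijectivity of \( \beta \) on \( X \) for (b) and (d), and \( \varpi_Y\beta^k(J_Q) = 0 \) together with invertibility of \( \id_{T_Q} - T_\beta \) on \( \Ker T_{\beta^k} \) for (c) and (e). The only deviation is in (a): you enlarge \( k \) and apply Fitting's lemma to \( T_\beta \), which forces the bookkeeping on \( X \) and \( Y \), whereas the paper keeps the least \( k \) and gets (a) directly from \( q = \varpi_X(q) + (q - \varpi_X(q)) \) for \( q \in J_Q \), so your enlargement is harmless but unnecessary.
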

\begin{proof}
  By Fitting's lemma, there exists a least \( k \geq 1 \) such that \( Q = \Im \beta^k \oplus \Ker \beta^k \).
  Since \( T_{\iota_B\pi_B} + T_{\iota_C\pi_C} = \id_{T_{B \otimes C}} \) and \( T_{\rho\sigma} = \id_{T_Q} \), we have
  \[
    T_\beta + T_\gamma
    = T_\rho(T_{\iota_B\pi_B} + T_{\iota_C\pi_C})T_{\sigma}
    = \id_{T_Q}.
  \]

  (a)
  This follows from \( q = \varpi_X(q) + (q - \varpi_X(q)) \) for \( q \in J_Q \).
  \index{\( q \) : Element}%

  (b)
  This is clear from \( \varpi_X\beta^{k + 1}(J_Q) = J_X \).

  (c)
  Since \( \varpi_Y\beta^k(J_Q) = 0 \), this follows from \( T_{\varpi_Y\gamma\beta^k} = T_{\varpi_Y}(\id_{T_Q} - T_\beta)T_{\beta^k} = 0 \).

  (d)
  Since \( \beta \) is bijective on \( X \), this follows from \( \Ker T_{\beta^k} \subseteq \Ker T_{\varpi_X\beta^k} = \Ker T_{\varpi_X\beta} \).

  (e)
  Since \( T_\gamma = \id_{T_Q} - T_\beta \), we have \( T_\gamma(\Ker T_{\beta^k}) = \Ker T_{\beta^k} \).
  Then \( \varpi_Y\beta^k(J_Q) = 0 \) and (a) yield \( T_{\varpi_Y\gamma}(\Ker T_{\beta^k}) = T_{\varpi_Y}(\Ker T_{\beta^k}) = T_{\varpi_Y}(T_Q) = T_Y \).
\end{proof}

\begin{lem}
  \label{lem:surj}
  Under \cref{hyp:common}, \( \alpha \colon Q \to X \otimes Y \) is surjective.
\end{lem}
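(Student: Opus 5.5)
By the fact recalled in \cref{sec:prelim}, it suffices to show that the cotangent map \( T_\alpha \colon T_Q \to T_{X \otimes Y} \) is surjective. For this I first need the domain and codomain of \( \alpha \) to be local algebras over a splitting field. The quotient \( Y \) of \( Q \) is clearly such an algebra. The algebra \( X = \Im \beta^k \) is a homomorphic image of \( Q \) via \( \beta^k \), so it is local with residue field \( F \). Then \( X \otimes Y \) is local with \( J_{X \otimes Y} = J_X \otimes Y + X \otimes J_Y \), because \( (X \otimes Y)/(J_X \otimes Y + X \otimes J_Y) \cong F \otimes F = F \). This gives the decomposition
\[
  T_{X \otimes Y} = T_{\iota_X}(T_X) \oplus T_{\iota_Y}(T_Y),
\]
with the two summand maps split by \( T_{\pi_X} \) and \( T_{\pi_Y} \). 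So it is enough to prove that
\[
  (T_{\pi_X\alpha}, T_{\pi_Y\alpha}) \colon T_Q \to T_X \oplus T_Y
\]
is surjective.

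Next I compute the composites \( \pi_X\alpha \) and \( \pi_Y\alpha \). Using \( \pi_X(x \otimes y) = x\epsilon_Y(y) \), we get \( \pi_X\alpha = \varpi_X\rho\iota_B\pi_B\sigma = \varpi_X\beta \). For this I use that \( \pi_B \) applied to \( b \otimes c \) gives \( b\epsilon_C(c) \), and that the augmentations are compatible: \( \epsilon_Y \varpi_Y \rho \iota_C = \epsilon_C \), since all of these are unital homomorphisms of local algebras to \( F \) killing the radicals. Similarly \( \pi_Y\alpha = \varpi_Y\gamma \). Thus the map in question is \( (T_{\varpi_X\beta}, T_{\varpi_Y\gamma}) \).

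Now I apply \cref{lem:ImKer}. Given \( (u, v) \in T_X \oplus T_Y \), part (b) gives \( s \in \Im T_{\beta^k} \) with \( T_{\varpi_X\beta}(s) = u \), and part (c) shows \( T_{\varpi_Y\gamma}(s) = 0 \). Part (e) gives \( t \in \Ker T_{\beta^k} \) with \( T_{\varpi_Y\gamma}(t) = v \), and part (d) shows \( T_{\varpi_X\beta}(t) = 0 \). Hence \( s + t \) maps to \( (u, v) \), so \( T_\alpha \) is surjective, and therefore \( \alpha \) is surjective.

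The main obstacle is justifying the identification \( \pi_X\alpha = \varpi_X\beta \) (and likewise \( \pi_Y\alpha = \varpi_Y\gamma \)) carefully. In particular, \( \varpi_X \) restricted to the image of \( \rho\iota_B \) must be treated as the algebra homomorphism it is intended to be, and the augmentations must commute with the maps involved. I would handle this by checking the identity on pure tensors \( b \otimes c \) and extending by linearity, applying \( \sigma \) afterwards.
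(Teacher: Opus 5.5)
Your proof is correct and takes essentially the same route as the paper. The paper's identity $T_\alpha = T_{\iota_X\varpi_X\beta} + T_{\iota_Y\varpi_Y\gamma}$ is exactly your $\pi_X\alpha = \varpi_X\beta$ and $\pi_Y\alpha = \varpi_Y\gamma$ combined with $T_{\iota_X\pi_X}+T_{\iota_Y\pi_Y}=\id$, and parts (b)--(e) of \cref{lem:ImKer} are then applied on $\Im T_{\beta^k}$ and $\Ker T_{\beta^k}$ just as you do.

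The step you flag as the main obstacle is harmless. $\Ker\beta^k$ is an ideal and $\Im\beta^k$ is a unital subalgebra complementing it, so $\varpi_X$ is an algebra homomorphism on all of $Q$.
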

\begin{proof}
  It suffices to prove that \( T_{X \otimes Y} = \Im T_\alpha \).
  Observe that
  \[
    T_\alpha = T_{\iota_X\varpi_X\beta} + T_{\iota_Y\varpi_Y\gamma}
    \quad\text{and}\quad
    T_{\iota_X\pi_X} + T_{\iota_Y\pi_Y} = \id_{T_{X \otimes Y}}.
  \]
  For some \( k \geq 1 \), by \cref{lem:ImKer}, we have
  \begin{align*}
    \Im T_\alpha
    &= T_\alpha(\Im T_{\beta^k}) + T_\alpha(\Ker T_{\beta^k}) \\
    &= T_{\iota_X\varpi_X\beta}(\Im T_{\beta^k}) + T_{\iota_Y\varpi_Y\gamma}(\Ker T_{\beta^k}) \\
    &= \Im T_{\iota_X} + \Im T_{\iota_Y} \\
    &= T_{X \otimes Y}.
    \qedhere
  \end{align*}
\end{proof}

\begin{lem}
  \label{lem:dim}
  Under \cref{hyp:common}, \( \dim Q \leq \dim X \dim Y \).
\end{lem}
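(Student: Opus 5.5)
The plan is to realise \( Q \) as a span of products \( xv \) with \( x \in X \) and \( v \) in a subspace \( V \subseteq Q \) with \( \dim V = \dim Y \). Then the multiplication map \( X \otimes V \to Q \) is surjective, which gives \( \dim Q \leq \dim X \dim Y \). Throughout let \( k \geq 1 \) be as in Fitting's lemma, so that \( X = \Im \beta^k \) and \( Q = X \oplus \Ker \beta^k \).

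\emph{Step 1: structure of \( X \).} Since \( \beta^k \) is an algebra endomorphism, \( X \) is a subalgebra of \( Q \) containing \( 1 \). It is local with \( J_X = X \cap J_Q \). Because \( \beta^k(Q) = F + \beta^k(J_Q) \) and \( \beta^k(J_Q) \subseteq J_Q \cap X \), we get \( \beta^k(J_Q) = J_X \). Consequently \( I := Q\beta^k(J_Q)Q = QJ_XQ \) and \( Y = Q/I \).

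\emph{Step 2: a commuting decomposition.} Put \( R_B = \rho\iota_B(B) \) and \( R_C = \rho\iota_C(C) \). These are commuting subalgebras of \( Q \), and \( Q = R_BR_C \) because \( \rho \) is surjective. Moreover \( \Im\beta \subseteq R_B \), so \( X \subseteq R_B \), and hence \( J_X \) commutes with \( R_C \). Choose a subspace \( V \subseteq Q \) mapping isomorphically onto \( Y \), so that \( Q = V + I \).

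\emph{Step 3: Nakayama-type iteration.} I aim to show \( Q = XV + J_XQ \). Granting this, substituting it into itself gives
\[
  Q = XV + J_X(XV + J_XQ) \subseteq XV + {J_X}^2Q,
\]
and continuing in the same way gives \( Q = XV + {J_X}^rQ \) for every \( r \). Since \( J_X \) is nilpotent, \( Q = XV \), which proves the bound.

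\emph{Main obstacle.} The hard step is the inclusion \( I = QJ_XQ \subseteq XV + J_XQ \); that is, moving \( J_X \) to the left past elements of \( Q \). Using \( Q = R_BR_C \) and the fact that \( J_X \) commutes with \( R_C \), one has \( QJ_XQ = R_BJ_XR_BR_C \). So it suffices to control \( R_BJ_X \) modulo \( J_XQ \). My first attempt would use the decomposition \( Q = X \oplus \Ker\beta^k \) applied to \( R_B \), together with the cotangent relations of \cref{lem:ImKer}. The goal is to show that the part of \( R_B \) outside \( X \) is generated, modulo \( J_XQ \), by elements that are killed by \( \beta^k \) and behave like elements of \( R_C \) on the cotangent level, since \( T_\beta + T_\gamma = \id_{T_Q} \). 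If this direct approach fails, I would fall back on a graded version of the argument: filter \( Q \) by the powers \( I^j \) and bound \( \dim I^j/I^{j+1} \) by \( \dim({J_X}^j/{J_X}^{j+1}) \cdot \dim Y \), then sum over \( j \).
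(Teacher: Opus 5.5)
There is a genuine gap, and it is exactly the step you flag as the main obstacle. Nothing in your Steps~1--2 proves \( QJ_XQ \subseteq XV + J_XQ \), and neither suggested route supplies the missing idea. Write \( I = QJ_XQ \) and \( Q = V + I \) with \( V \) arbitrary. Expanding \( I = (V+I)J_X(V+I) \) produces terms \( vjw \) with \( v, w \in V \) and \( j \in J_X \). Since \( v \) need not commute with \( j \), nothing puts \( vjw \) in \( XV + J_XQ \).

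Your graded fallback, bounding \( \dim I^j/I^{j+1} \) by \( \dim({J_X}^j/{J_X}^{j+1})\dim Y \), needs the same ability to move \( J_X \) past lifts of \( Y \), so it does not get around the problem. Your first attempt, via \( X \oplus \Ker\beta^k \) and cotangent relations, is too vague to check. Cotangent data only holds modulo \( {J_Q}^2 \) and would still have to be upgraded to a statement about actual elements.

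The missing idea is to choose the lifts inside \( \gamma(Q) \subseteq \rho\iota_C(C) \). This is possible because \cref{lem:ImKer}(e) gives \( \Im T_{\varpi_Y\gamma} = T_Y \), and surjectivity on cotangent spaces implies that \( \varpi_Y\gamma \colon Q \to Y \) is surjective. Every element of such a \( V \) then commutes with all of \( X \subseteq \rho\iota_B(B) \), so \( vjw = jvw \in J_XQ \). The cross terms \( vjb' \), \( a'jw \), \( a'jb' \) with \( a', b' \in I \) lie in \( I^2 \), because \( QJ_X \subseteq I \) and \( J_XQ \subseteq I \). Hence \( I \subseteq J_XQ + I^2 \). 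Iterating, using \( I^2 \subseteq (J_XQ + I^r)I \subseteq J_XQ + I^{r+1} \), gives \( I \subseteq J_XQ + I^r \) for all \( r \). Since \( I \subseteq J_Q \) is nilpotent, \( I = J_XQ \), and your Step~3 then goes through verbatim.

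A posteriori your claim \( Q = XV \) holds for every complement \( V \), but you need the special choice to prove it. This is essentially the paper's proof. It picks \( q_1, \dotsc, q_\ell \in \gamma(Q) \) lifting a basis of \( Y \) and iterates \( Q = Xq_1 + \dotsb + Xq_\ell + (QJ_XQ)^r \) directly.
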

\begin{proof}
  \cref{lem:ImKer}(e) implies \( \Im T_{\varpi_Y\gamma} = T_Y \) and hence \( \Im \varpi_Y\gamma = Y \).
  Set \( \ell = \dim Y \) and choose \( q_1, \dotsc, q_\ell \in \gamma(Q) \) such that \( \varpi_Y(q_1), \dotsc, \varpi_Y(q_\ell) \) form a basis of \( Y \).
  \index{\( l \) @ \( \ell \) : Integer}%
  \index{\( q_1, \dotsc, q_\ell \) : Elements}%
  By definition of \( X \) and \( Y \),
  \[
    Q = X q_1 + \dotsb + X q_\ell + Q J_X Q.
  \]
  Since every element of \( J_X \subseteq \rho\iota_B(B) \) commutes with \( q_1, \dotsc, q_\ell \in \rho\iota_C(C) \) and \( J_Q \) is nilpotent, iterating the above yields
  \begin{align*}
    Q
    &= X q_1 + \dotsb + X q_\ell + Q J_X Q \\
    &= X q_1 + \dotsb + X q_\ell + (Q J_X Q)^2 \\
    &\vdotswithin{=} \\
    &= X q_1 + \dotsb + X q_\ell.
  \end{align*}
  Hence \( \dim Q \leq \dim X \dim Y \).
\end{proof}

\begin{prop}
  \label{prop:factor}
  Under \cref{hyp:common}, \( Q \cong X \otimes Y \).
\end{prop}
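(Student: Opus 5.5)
The plan is to show that the homomorphism \( \alpha \colon Q \to X \otimes Y \) from \cref{hyp:common} is an isomorphism of algebras. We already know it is an algebra homomorphism, as a composite of \( \sigma \) with a tensor product of homomorphisms. For \( \varpi_X\rho\iota_B \otimes \varpi_Y\rho\iota_C \) to be an algebra map out of \( B \otimes C \), the images of \( \varpi_X\rho\iota_B \) and \( \varpi_Y\rho\iota_C \) must land in algebras whose tensor product is formed. This is automatic, since the map is defined on \( B \otimes C \) factorwise. So it suffices to prove bijectivity.

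First, \cref{lem:surj} gives that \( \alpha \) is surjective. Hence \( \dim Q \geq \dim (X \otimes Y) = \dim X \dim Y \). Second, \cref{lem:dim} gives the reverse inequality \( \dim Q \leq \dim X \dim Y \). Combining the two, \( \dim Q = \dim(X \otimes Y) \). A surjective linear map between finite-dimensional vector spaces of equal dimension is injective, so \( \alpha \) is a bijective algebra homomorphism. Therefore \( Q \cong X \otimes Y \).

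The only point needing care is that \( X \) and \( Y \) are genuinely algebras, so that \( X \otimes Y \) makes sense. For \( Y = Q/Q\beta^k(J_Q)Q \), this holds because it is a quotient by a two-sided ideal. It is nonzero because \( \beta^k(J_Q) \subseteq J_Q \), so the ideal lies in \( J_Q \) and \( Y \) is local. For \( X = \Im \beta^k \), it is the image of an algebra endomorphism, hence a subalgebra containing \( 1 \). The projection \( \varpi_X \) along \( \Ker \beta^k \) is an algebra homomorphism: one can write \( \varpi_X = (\beta^k|_X)^{-1}\beta^k \) for the stable index \( k \), since \( \beta^k \) restricted to \( X = \Im\beta^{2k} \) is bijective. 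This is implicitly used in the preceding lemmas. Once these are granted, the main work is already done in \cref{lem:surj} and \cref{lem:dim}, and the proposition follows by the dimension count.
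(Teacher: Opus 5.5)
Your proof is correct and takes the same approach as the paper, which also concludes that \( \alpha \) is an isomorphism by combining surjectivity (\cref{lem:surj}) with the bound \( \dim Q \leq \dim X \dim Y \) (\cref{lem:dim}). Your extra checks, that \( X \) and \( Y \) are local algebras and that \( \varpi_X = (\beta^k|_X)^{-1}\beta^k \) is an algebra homomorphism, are sound and spell out points the paper leaves implicit.
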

\begin{proof}
  The homomorphism \( \alpha \colon Q \to X \otimes Y \) is an isomorphism by \cref{lem:surj,lem:dim}.
  Hence \( Q \cong X \otimes Y \).
\end{proof}

\section{First tensor factor}
The previous section gives a tensor factorization of a retract.
We use it to show that a given irreducible tensor factor is a homomorphic image of some tensor factor in another tensor factorization.
\begin{lem}
  \label{lem:avoid}
  Let \( B \), \( C \), \( Q \) be local algebras over a splitting field.
  Assume that \( Q \) is irreducible.
  If \( \rho \colon B \otimes C \to Q \) is a retraction, then \( \rho\iota_B \colon B \to Q \) or \( \rho\iota_C \colon C \to Q \) is a retraction.
\end{lem}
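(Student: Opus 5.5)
We place ourselves under \cref{hyp:common}: fix a section \( \sigma \) of \( \rho \), and let \( \beta, \gamma, X, Y, \alpha \) be as defined there. By \cref{prop:factor} we have \( Q \cong X \otimes Y \). Since \( Q \) is irreducible, \( X \cong F \) or \( X \cong Q \) (dimension count: \( X \otimes Y \cong Q \) with \( X \) trivial or all of \( Q \)). The plan is to show that in the first case \( \rho\iota_C \) is a retraction and in the second \( \rho\iota_B \) is. Note that \( \dim Q = \dim X \dim Y \) forces \( \dim Y = 1 \) in the second case and \( \dim X = 1 \) in the first.

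\emph{Case \( \dim X = \dim Q \).} Then \( X = \Im \beta^k = Q \), so \( \beta^k \) is surjective, hence bijective on the finite-dimensional \( Q \). Since \( \beta = \rho\iota_B(\pi_B\sigma) \), the composite \( \beta \) factors through \( \rho\iota_B \). Setting \( \tau = \pi_B\sigma\beta^{-1} \colon Q \to B \), which is a homomorphism because \( \beta^{-1} \) is a power of \( \beta \) and hence an algebra automorphism, we get \( \rho\iota_B\tau = \beta\beta^{-1} = \id_Q \). Thus \( \rho\iota_B \) is a retraction with section \( \tau \).

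\emph{Case \( \dim X = 1 \).} Then \( \dim Y = \dim Q \), so \( Q\beta^k(J_Q)Q = 0 \), meaning \( \beta^k(J_Q) = 0 \). Hence \( T_{\beta^k} = 0 \), i.e.\ \( T_\beta \) is nilpotent. Since \( T_\gamma = \id_{T_Q} - T_\beta \) (as in the proof of \cref{lem:ImKer}), \( T_\gamma \) is invertible, so \( \gamma \) is surjective and therefore an automorphism of \( Q \). As before, \( \gamma = \rho\iota_C(\pi_C\sigma) \), so \( \pi_C\sigma\gamma^{-1} \) is a section of \( \rho\iota_C \).

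The only subtle point is the dichotomy step: deducing from \( Q \cong X\otimes Y \) and the irreducibility of \( Q \) that one of \( X, Y \) is one-dimensional. If \( X \cong F \), this is immediate. If instead \( X \cong Q \), then \( \dim Y = 1 \) by dimension count. Here \( X \) and \( Y \) are genuine algebras: \( X = \Im\beta^k \) is a subalgebra, and \( Y \) is a nonzero quotient, since \( \beta^k(J_Q) \subseteq J_Q \) so the ideal generated lies in \( J_Q \). So the definition of a tensor factor applies, and the argument goes through.
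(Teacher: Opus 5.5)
Your proof is correct and is essentially the paper's argument. The paper picks (without loss of generality) whichever of \(T_\beta, T_\gamma\) is non-nilpotent and uses \cref{prop:factor} to force \(X_\beta \cong Q\). You instead split on whether \(X_\beta\) is trivial, and in the trivial case you get \(\gamma\) bijective directly from \(T_\gamma = \id_{T_Q} - T_\beta\) being unipotent, which spares you the symmetric application to \(\gamma\).

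One small slip: \(\beta^{-1}\) need not be a positive power of \(\beta\), since \(\operatorname{Aut}(Q)\) can be infinite when \(F\) is infinite. It is nonetheless an algebra homomorphism, simply because it is the inverse of a bijective one.
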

\begin{proof}
  Let \( \sigma \colon Q \to B \otimes C \) be a section of \( \rho \).
  Define the homomorphisms \( \beta \colon Q \to Q \) and \( \gamma \colon Q \to Q \) by \( \beta = \rho\iota_B\pi_B\sigma \) and \( \gamma = \rho\iota_C\pi_C\sigma \).
  Note that \( T_Q \) is non-zero as \( Q \) is non-trivial.
  Since \( T_{\iota_B\pi_B} + T_{\iota_C\pi_C} = \id_{T_{B \otimes C}} \) and \( T_{\rho\sigma} = \id_{T_Q} \), we have
  \[
    T_\beta + T_\gamma
    = T_\rho(T_{\iota_B\pi_B} + T_{\iota_C\pi_C})T_{\sigma}
    = \id_{T_Q}.
  \]
  In particular, \( T_\beta \) and \( T_\gamma \) commute and at least one of them is not nilpotent.
  We may assume that it is \( T_\beta \).

  By \cref{prop:factor}, \( X_\beta \) is a tensor factor of \( Q \).
  Since \( T_\beta \) is not nilpotent and \( Q \) is irreducible, \( X_\beta \) is non-trivial and isomorphic to \( Q \).
  Then \( \beta \) is bijective and a section of \( \rho\iota_B \) is given by \( \pi_B\sigma\beta^{-1} \).
\end{proof}

\begin{prop}
  \label{prop:first}
  Let \( P_1, \dotsc, P_n \), \( Q \), \( R \) be local algebras over a splitting field.
  \index{\( R \) : Algebra}%
  Assume that \( Q \) is irreducible and there exists an isomorphism
  \[
    \phi \colon P_1 \otimes \dotsb \otimes P_n \to Q \otimes R.
  \]
  Then \( \pi_Q\phi\iota_B \colon B \to Q \) is surjective for some \( B \) among \( P_1, \dotsc, P_n \).
\end{prop}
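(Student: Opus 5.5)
The plan is to reduce to \cref{lem:avoid} by induction on \( n \), using the retraction \( \pi_Q\phi \colon P_1 \otimes \dotsb \otimes P_n \to Q \). This map is a retraction because \( \pi_Q\iota_Q = \id_Q \), so \( \phi^{-1}\iota_Q \) is a section of it. Here \( \iota_B \) for \( B = P_j \) denotes the canonical embedding of \( P_j \) into \( P_1 \otimes \dotsb \otimes P_n \). Since \( Q \) is irreducible, it is non-trivial, so \( n \geq 1 \); the case \( n = 0 \) would give \( F \cong Q \otimes R \), which is impossible by dimension.

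I will prove by induction on \( n \) the stronger statement: if \( \rho \colon P_1 \otimes \dotsb \otimes P_n \to Q \) is a retraction and \( Q \) is irreducible, then \( \rho\iota_{P_j} \) is a retraction for some \( j \). A retraction is surjective, so this gives the proposition.

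\begin{itemize}
  \item[(1)] For \( n = 1 \) there is nothing to prove.
  \item[(2)] For \( n \geq 2 \), write \( P_1 \otimes \dotsb \otimes P_n = B' \otimes C' \) with \( B' = P_1 \otimes \dotsb \otimes P_{n-1} \) and \( C' = P_n \). Both are local algebras over the splitting field: a tensor product of local split algebras is local with residue algebra \( F \), since \( J_{B'} \otimes C' + B' \otimes J_{C'} \) is a nilpotent ideal of codimension one.
  \item[(3)] Apply \cref{lem:avoid} to \( \rho \colon B' \otimes C' \to Q \). Either \( \rho\iota_{C'} = \rho\iota_{P_n} \) is a retraction, and we are done, or \( \rho\iota_{B'} \colon P_1 \otimes \dotsb \otimes P_{n-1} \to Q \) is a retraction.
  \item[(4)] In the second case, apply the induction hypothesis to \( \rho\iota_{B'} \). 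This yields \( j \leq n-1 \) with \( \rho\iota_{B'}\iota_{P_j} \) a retraction. Since \( \iota_{B'}\iota_{P_j} \) is the canonical embedding of \( P_j \) into the full product, this completes the induction.
\end{itemize}

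The only points needing care are bookkeeping. First, \cref{lem:avoid} requires \( B \), \( C \), \( Q \) to be local over a splitting field, which is the locality check in step (2). Second, the composite of canonical embeddings must agree with the canonical embedding of \( P_j \), which is clear from associativity of the tensor product. I expect no real obstacle beyond these checks, since all the work is done by \cref{lem:avoid}.
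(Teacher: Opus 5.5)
Your proof is correct and is essentially the paper's argument. The paper also takes the retraction \( \rho = \pi_Q\phi \) with section \( \sigma = \phi^{-1}\iota_Q \) and applies \cref{lem:avoid} recursively, which you make explicit as an induction on \( n \), including the checks that \( P_1 \otimes \dotsb \otimes P_{n-1} \) is again local and split and that composites of canonical embeddings are canonical embeddings.
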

\index{\( \phi \) : Homomorphism}%
\begin{proof}
  Define the homomorphisms \( \rho \colon P_1 \otimes \dotsb \otimes P_n \to Q \) and \( \sigma \colon Q \to P_1 \otimes \dotsb \otimes P_n \) by \( \rho = \pi_Q\phi \) and \( \sigma = \phi^{-1}\iota_Q \).
  Then \( \rho \) is a retraction with a section \( \sigma \).
  Since \( Q \) is irreducible, applying \cref{lem:avoid} recursively yields an appropriate \( B \).
\end{proof}

\section{Other tensor factors}
We show that a surjective homomorphism between a pair of tensor factors yields a surjective homomorphism between the other pair in the opposite direction.
\begin{prop}
  \label{prop:other}
  Let \( B \), \( C \), \( Q \), \( R \) be local algebras over a splitting field.
  Assume that there exists an isomorphism
  \[
    \phi \colon B \otimes C \to Q \otimes R.
  \]
  If \( \pi_Q\phi\iota_B \colon B \to Q \) is surjective, then so is \( \pi_C\phi^{-1}\iota_R \colon R \to C \).
\end{prop}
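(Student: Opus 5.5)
The plan is to reduce everything to linear algebra on cotangent spaces, using the fact recalled in \cref{sec:prelim} that a homomorphism of local algebras is surjective as soon as its cotangent map is. So it suffices to show that \( T_{\pi_C\phi^{-1}\iota_R} \colon T_R \to T_C \) is surjective, knowing that \( T_{\pi_Q\phi\iota_B} \colon T_B \to T_Q \) is surjective. The surjectivity of \( \pi_Q\phi\iota_B \) enters only through its cotangent map.

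First I would record the decompositions
\[
  T_{B \otimes C} = \Im T_{\iota_B} \oplus \Im T_{\iota_C},
  \qquad
  T_{Q \otimes R} = \Im T_{\iota_Q} \oplus \Im T_{\iota_R}.
\]
They come from \( T_{\iota_B\pi_B} + T_{\iota_C\pi_C} = \id \) together with \( \pi_B\iota_B = \id_B \), \( \pi_C\iota_B = \epsilon_B \) and the analogous identities. These identities show that \( T_{\iota_B\pi_B} \) and \( T_{\iota_C\pi_C} \) are complementary idempotents, that \( T_{\iota_B} \) and \( T_{\iota_C} \) are injective, and that \( \Ker T_{\pi_C} = \Im T_{\iota_B} \) and \( \Ker T_{\pi_Q} = \Im T_{\iota_R} \). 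In particular \( T_{\pi_C} \) and \( T_{\pi_Q} \) are surjective.

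The key step is a short subspace argument. Surjectivity of \( T_{\pi_C\phi^{-1}\iota_R} = T_{\pi_C}T_{\phi^{-1}}T_{\iota_R} \) is equivalent to
\[
  T_{\phi^{-1}}(\Im T_{\iota_R}) + \Ker T_{\pi_C} = T_{B \otimes C},
\]
because \( T_{\pi_C} \) is surjective. Using \( \Ker T_{\pi_C} = \Im T_{\iota_B} \) and applying the isomorphism \( T_\phi \), this becomes
\[
  \Im T_{\iota_R} + T_\phi(\Im T_{\iota_B}) = T_{Q \otimes R}.
\]
Since \( \Im T_{\iota_R} = \Ker T_{\pi_Q} \) and \( T_{\pi_Q} \) is surjective, this last equality is in turn equivalent to \( T_{\pi_Q}T_\phi T_{\iota_B} = T_{\pi_Q\phi\iota_B} \) being surjective. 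That is exactly the hypothesis, since a surjective homomorphism of local algebras maps \( J \) onto \( J \) and hence induces a surjective cotangent map.

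I expect the only delicate point to be justifying the kernel identities \( \Ker T_{\pi_C} = \Im T_{\iota_B} \) and \( \Ker T_{\pi_Q} = \Im T_{\iota_R} \) at the level of cotangent spaces rather than of algebras. I would handle this purely formally via the idempotent relations above. Once these are in place, the chain of equivalences is routine, and the conclusion follows from the surjectivity criterion via \( T \).
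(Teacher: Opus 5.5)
Your proposal is correct and is essentially the paper's argument. Like the paper, you reduce to cotangent spaces, use \( \Ker T_{\pi_Q} = \Im T_{\iota_R} \) to get \( T_{Q \otimes R} = \Im T_{\phi\iota_B} + \Im T_{\iota_R} \), and then push forward along \( T_{\pi_C\phi^{-1}} \), where \( \Im T_{\phi\iota_B} \) vanishes because \( T_{\pi_C\iota_B} = 0 \). The only difference is cosmetic: your chain of equivalences uses the full equality \( \Ker T_{\pi_C} = \Im T_{\iota_B} \), whereas the paper argues in one direction and needs only the inclusion \( \Im T_{\iota_B} \subseteq \Ker T_{\pi_C} \).
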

\begin{proof}
  It suffices to prove that \( T_C = \Im T_{\pi_C\phi^{-1}\iota_R} \).
  Since \( Q = \Im \pi_Q\phi\iota_B \) by assumption, \( T_Q = \Im T_{\pi_Q\phi\iota_B} \).
  Taking preimages of both sides under \( T_{\pi_Q} \colon T_{Q \otimes R} \to T_Q \) yields
  \begin{align*}
    T_{Q \otimes R}
    &= \Im T_{\phi\iota_B} + \Ker T_{\pi_Q} \\
    &= \Im T_{\phi\iota_B} + \Im T_{\iota_R}.
  \end{align*}
  Applying \( T_{\pi_C\phi^{-1}} \colon T_{Q \otimes R} \to T_C \) yields
  \begin{align*}
    & T_{\pi_C\phi^{-1}}(T_{Q \otimes R}) = \Im T_{\pi_C} = T_C, \\
    & T_{\pi_C\phi^{-1}}(\Im T_{\phi\iota_B}) = \Im T_{\pi_C\iota_B} = 0, \\
    & T_{\pi_C\phi^{-1}}(\Im T_{\iota_R}) = \Im T_{\pi_C\phi^{-1}\iota_R}.
  \end{align*}
  Hence \( T_C = \Im T_{\pi_C\phi^{-1}\iota_R} \).
\end{proof}

\section{Proof of uniqueness}
With the propositions of the previous two sections, a comparison of dimensions completes the proof.
\label{sec:uniq}
\begin{prop}
  \label{prop:ind}
  Let \( P_1, \dotsc, P_n \), \( Q \), \( R \) be local algebras over a splitting field.
  Assume that \( Q \) is irreducible, \( \dim P_i \leq \dim Q \) for every \( i \) and
  \[
    P_1 \otimes \dotsb \otimes P_n \cong Q \otimes R.
  \]
  Then, after reordering indices, we have
  \[
    P_1 \cong Q
    \quad\text{and}\quad
    P_2 \otimes \dotsb \otimes P_n \cong R.
  \]
\end{prop}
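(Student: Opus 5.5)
Fix an isomorphism \( \phi \colon P_1 \otimes \dotsb \otimes P_n \to Q \otimes R \). Since \( Q \) is irreducible, \cref{prop:first} gives a factor, which after reordering we take to be \( P_1 \), such that \( \pi_Q\phi\iota_{P_1} \colon P_1 \to Q \) is surjective. Together with the hypothesis \( \dim P_1 \leq \dim Q \), this forces \( \dim P_1 = \dim Q \). A surjection between spaces of equal finite dimension is bijective, so \( P_1 \cong Q \).

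For the remaining factors, set \( B = P_1 \) and \( C = P_2 \otimes \dotsb \otimes P_n \). We regard \( \phi \) as an isomorphism \( B \otimes C \to Q \otimes R \); the map \( \iota_{P_1} \) is exactly \( \iota_B \) under this identification. The algebra \( C \) is local over the splitting field, since a tensor product of local split algebras is local with residue field \( F \). Applying \cref{prop:other}, we conclude that \( \pi_C\phi^{-1}\iota_R \colon R \to C \) is surjective. Hence \( \dim R \geq \dim C \).

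To finish we compare dimensions:
\[
  \dim B \dim C = \dim Q \dim R
\]
and \( \dim B = \dim Q \neq 0 \), so \( \dim C = \dim R \). The surjection \( R \to C \) is therefore an isomorphism, giving \( P_2 \otimes \dotsb \otimes P_n \cong R \).

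The main point to check is that the earlier propositions apply to the grouped factor \( C \). For \cref{prop:first}, the cited result already handles \( n \) factors. For \cref{prop:other}, we need \( C \) to satisfy the standing hypotheses, namely that it is local over a splitting field. This holds because \( J_B \otimes C + B \otimes J_C \) is a nilpotent ideal with quotient \( F \otimes F = F \). If \( n = 1 \), then \( C = F \) is trivially fine. Apart from this verification the argument is a direct dimension count.
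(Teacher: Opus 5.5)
Your proof is correct and follows the paper's argument. \cref{prop:first} gives the surjection \( P_1 \to Q \), \cref{prop:other} gives the surjection \( R \to C \), and the dimension count (\( \dim P_1 \leq \dim Q \) together with \( \dim B \dim C = \dim Q \dim R \)) makes both of them isomorphisms. Your explicit check that \( C = P_2 \otimes \dotsb \otimes P_n \) is again local and split is a detail the paper leaves implicit.
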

\begin{proof}
  Let \( \phi \colon P_1 \otimes \dotsb \otimes P_n \to Q \otimes R \) be an isomorphism.
  Since \( Q \) is irreducible, after reordering indices, \( \pi_Q\phi\iota_B \colon B \to Q \) is surjective with \( B = P_1 \) by \cref{prop:first}.
  Set \( C = P_2 \otimes \dotsb \otimes P_n \).
  Then \( \pi_C\phi^{-1}\iota_R \colon R \to C \) is also surjective by \cref{prop:other}.

  Since \( \dim B \leq \dim Q \) and \( \dim C \geq \dim R \) by assumption, the surjective homomorphisms \( B \to Q \) and \( R \to C \) are isomorphisms.
\end{proof}

\begin{proof}[Proof of \cref{main:thm}]
  We prove uniqueness by induction on \( \min(n, m) \geq 0 \).
  The base case \( \min(n, m) = 0 \) is clear.

  For the induction step \( \min(n, m) \geq 1 \), we may assume that \( Q_1 \) has the maximum dimension among the tensor factors so that \( \dim P_i \leq \dim Q_1 \) for every \( i \).
  Recall that if an algebra is local and splits, then the same holds for tensor factors.
  Since \( Q_1 \) is irreducible, after reordering indices, we have
  \[
    P_1 \cong Q_1
    \quad\text{and}\quad
    P_2 \otimes \dotsb \otimes P_n \cong Q_2 \otimes \dotsb \otimes Q_m
  \]
  by \cref{prop:ind}.
  Hence uniqueness follows from the induction hypothesis.
\end{proof}

\begin{rmk}[Commutativity]
  \label{rmk:comm}
  Commutativity is not explicitly stated in Horst's paper~\cite{Ho87}.
  However, the proof of \cite[Lemma~3]{Ho87} appears to rely on Krull's intersection theorem.
  Since Jacobson's conjecture remains open, we regard the algebras studied in~\cite{Ho87} as commutative.
  This interpretation is also compatible with the fact that the paper is assigned classification codes for commutative algebra in MathSciNet and zbMATH Open.
\end{rmk}

\begin{rmk}[Characteristic]
  \label{rmk:char}
  N\"usken \cite[p.~527]{Nu02} already noted that some intermediate results of Horst~\cite{Ho87} fail in positive characteristic.
  For completeness, we record counterexamples.
  Let \( F \) be a field of positive characteristic \( p \) and let \( G \) be a cyclic group generated by an element \( g \) of order \( p \).
  \index{\( G \) : Group}%
  \index{\( g \) : Element}%
  \index{\( p \) : Integer}%
  Set \( A = B = FG \) and define the homomorphism \( \phi \colon A \to A \otimes B \) by \( \phi(g) = g \otimes g \).
  Then \( \pi_A \phi \colon A \to A \) is an automorphism and \( \Ker \pi_B\phi \neq J_A \).
  In particular, \cite[Lemma~4]{Ho87} fails without assuming characteristic zero.
\end{rmk}

\appendix
\crefalias{section}{appendix}
\section{Application}
\label{appx:MIP}
Applications of tensor factorization were already given in~\cite{Ho87}.
Horst's motivation for studying these problems appears to have come from direct product decompositions of germs of complex analytic spaces.
Our study, by contrast, grew out of joint work with del R\'io, Garc\'ia-Lucas, Margolis and Stanojkovski on the modular isomorphism problem.
In this problem, it is essential to study tensor factorization without assuming commutativity or characteristic zero.

Let \( F \) be a field of positive characteristic \( p \) and let \( G \) and \( H \) be finite \( p \)-groups.
\index{\( H \) : Group}%
Recall that the modular group algebra \( FG \) is local and splits.
The modular isomorphism problem asks whether \( FG \cong FH \) implies \( G \cong H \).
For details, see the survey by Margolis~\cite{M22} and the references therein.
The tensor product of modular group algebras has been studied in~\cite{CK95,GL24,GLdRS26,MSS23}.
For a finite \( p \)-group \( K \), \cref{main:cor} yields a new reduction
\[
  F[G \times K] \cong F[H \times K] \implies FG \cong FH.
\]
\index{\( K \) : Group}%

This type of reduction of the modular isomorphism problem appeared first in joint work with Margolis and Stanojkovski~\cite{MSS23}.
Let \( G_\Elem \) denote an elementary abelian direct factor of \( G \) of maximum order.
\index{\( G_\Elem \) : Group}%
Then \( G \cong G/G_\Elem \times G_\Elem \) and \( G_\Elem \) is uniquely determined up to isomorphism.
Margolis et al.\ \cite[Theorem~4.1]{MSS23} proved that \( FG \cong FH \) if and only if \( F[G/G_\Elem] \cong F[H/H_\Elem] \) and \( FG_\Elem \cong FH_\Elem \).

It is also natural to consider abelian direct factors.
Let \( G_\Ab \) denote an abelian direct factor of \( G \) of maximum order.
\index{\( G_\Ab \) : Group}%
Then \( G \cong G/G_\Ab \times G_\Ab \) and \( G_\Ab \) is uniquely determined up to isomorphism.
Over the prime field \( \FF_p \), Garc\'ia-Lucas~\cite[Theorem~A]{GL24} proved that \( \FF_p G \cong \FF_p H \) if and only if \( \FF_p [G/G_\Ab] \cong \FF_p [H/H_\Ab] \) and \( \FF_p G_\Ab \cong \FF_p H_\Ab \).
\index{\( F_p \) @ \( \FF_p \) : Field}%
It is also proved in \cite[Proposition~C]{GL24} that
\[
  FG \cong FH \implies FG_\Ab \cong FH_\Ab
\]
without assuming \( F = \FF_p \).
Garc\'ia-Lucas \cite[Question~D]{GL24} then asked if the same holds for \( G/G_\Ab \).
Using the cancellation law, we provide a positive answer to this question.

\begin{mainthm}
  Let \( F \) be a field of positive characteristic \( p \) and let \( G \) and \( H \) be finite \( p \)-groups.
  Then
  \[
    FG \cong FH \implies F[G/G_\Ab] \cong F[H/H_\Ab].
  \]
\end{mainthm}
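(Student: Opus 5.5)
We derive this from \cref{main:cor} together with Garc\'ia-Lucas's result \cite[Proposition~C]{GL24}. Suppose \( FG \cong FH \). Write \( G \cong G/G_\Ab \times G_\Ab \) and \( H \cong H/H_\Ab \times H_\Ab \). Since the group algebra of a direct product is the tensor product of the group algebras, we obtain
\[
  F[G/G_\Ab] \otimes FG_\Ab \cong FG \cong FH \cong F[H/H_\Ab] \otimes FH_\Ab .
\]
By \cite[Proposition~C]{GL24}, \( FG \cong FH \) gives \( FG_\Ab \cong FH_\Ab \). Substituting, we get
\[
  F[G/G_\Ab] \otimes FG_\Ab \cong F[H/H_\Ab] \otimes FG_\Ab .
\]

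To apply \cref{main:cor} with \( A = F[G/G_\Ab] \), \( B = F[H/H_\Ab] \) and \( C = FG_\Ab \), we must check its hypotheses. All three are modular group algebras of finite \( p \)-groups over \( F \), since quotients and subgroups of \( p \)-groups are \( p \)-groups. As recalled in \Cref{appx:MIP}, such an algebra is local, and its residue algebra modulo the augmentation ideal is \( F \) itself. So \( F \) is a splitting field for each of them, and \cref{main:cor} yields \( F[G/G_\Ab] \cong F[H/H_\Ab] \).

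The only point needing care is the well-definedness of the objects involved. \( G_\Ab \) is determined only up to isomorphism, and \( G/G_\Ab \) must be read as a complement \( G' \) with \( G \cong G' \times G_\Ab \). I would use the paper's convention that \( G_\Ab \) is unique up to isomorphism. For the complement, note that even if the quotient depended on the choice of \( G_\Ab \), any two choices give complements whose group algebras are isomorphic. Indeed, this follows from the same cancellation argument applied with \( H = G \).

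Beyond this, the main obstacle is entirely outsourced to \cite[Proposition~C]{GL24} and to \cref{main:cor}. The remaining work is routine: identifying group algebras of direct products with tensor products, and verifying the local and splitting hypotheses.
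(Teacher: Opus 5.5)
Your proof is correct and follows the paper's argument exactly. You obtain \( FG_\Ab \cong FH_\Ab \) from \cite[Proposition~C]{GL24}, write both group algebras as tensor products via the direct product decompositions, and cancel using \cref{main:cor}; your added checks (locality and splitting over \( F \), and well-definedness of the complement) are details the paper leaves implicit.
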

\begin{proof}
  It follows from \cite[Proposition~C]{GL24} that \( FG_\Ab \cong FH_\Ab \).
  Since
  \[
    F[G/G_\Ab] \otimes FG_\Ab \cong FG \cong FH \cong F[H/H_\Ab] \otimes FH_\Ab,
  \]
  \cref{main:cor} yields \( F[G/G_\Ab] \cong F[H/H_\Ab] \).
\end{proof}

\printbibliography

@article {CK95,
    AUTHOR = {Carlson, Jon F. and Kov\'acs, L. G.},
     TITLE = {Tensor factorizations of group algebras and modules},
   JOURNAL = {J. Algebra},
  FJOURNAL = {Journal of Algebra},
    VOLUME = {175},
      YEAR = {1995},
     PAGES = {385--407},
       DOI = {10.1006/jabr.1995.1193},
}

@article {GL24,
    AUTHOR = {Garc\'ia-Lucas, Diego},
     TITLE = {The modular isomorphism problem and abelian direct factors},
   JOURNAL = {Mediterr. J. Math.},
  FJOURNAL = {Mediterranean Journal of Mathematics},
    VOLUME = {21},
      YEAR = {2024},
     PAGES = {Paper No. 18, 21},
       DOI = {10.1007/s00009-023-02557-1},
}

@article {GLdRS26,
    AUTHOR = {Garc\'ia-Lucas, Diego and del R\'io, \'Angel and Sakurai,
              Taro},
     TITLE = {On commutative tensor factors of group algebras},
   JOURNAL = {Algebr. Represent. Theory},
  FJOURNAL = {Algebras and Representation Theory},
    VOLUME = {29},
      YEAR = {2026},
     PAGES = {153--160},
       DOI = {10.1007/s10468-026-10379-4},
}

@article {Ho87,
    AUTHOR = {Horst, Camilla},
     TITLE = {A cancellation theorem for {A}rtinian local algebras},
   JOURNAL = {Math. Ann.},
  FJOURNAL = {Mathematische Annalen},
    VOLUME = {276},
      YEAR = {1987},
     PAGES = {657--662},
       DOI = {10.1007/BF01456993},
}

@article {M22,
    AUTHOR = {Margolis, Leo},
     TITLE = {The modular isomorphism problem: a survey},
   JOURNAL = {Jahresber. Dtsch. Math.-Ver.},
  FJOURNAL = {Jahresbericht der Deutschen Mathematiker-Vereinigung},
    VOLUME = {124},
      YEAR = {2022},
     PAGES = {157--196},
       DOI = {10.1365/s13291-022-00249-5},
}

@article {MSS23,
    AUTHOR = {Margolis, Leo and Sakurai, Taro and Stanojkovski, Mima},
     TITLE = {Abelian invariants and a reduction theorem for the modular
              isomorphism problem},
   JOURNAL = {J. Algebra},
  FJOURNAL = {Journal of Algebra},
    VOLUME = {636},
      YEAR = {2023},
     PAGES = {533--559},
       DOI = {10.1016/j.jalgebra.2023.08.035},
}

@article {Nu02,
    AUTHOR = {N\"usken, Michael},
     TITLE = {Unique tensor factorization of loop-resistant algebras over a
              field of finite characteristic},
   JOURNAL = {J. Algebra},
  FJOURNAL = {Journal of Algebra},
    VOLUME = {251},
      YEAR = {2002},
     PAGES = {509--528},
       DOI = {10.1006/jabr.2001.9126},
}


\end{document}